        \title{Topological modular forms and conformal nets}       
       \author{Christopher L. Douglas} 
      \address{Mathematical Institute\\ 24--29 St Giles'\\ Oxford\\ OX1 3LB, United Kingdom}
        \email{cdouglas@maths.ox.ac.uk}
      \urladdr{http://people.maths.ox.ac.uk/cdouglas}
       \author{Andr{\'e} G. Henriques}
      \address{Mathematisch Instituut\\
               Universiteit Utrecht\\
               3508 TA Utrecht, The Netherlands}
        \email{a.g.henriques@uu.nl}
      \urladdr{http://www.staff.science.uu.nl/\!\raisebox{-1mm}{~}\!henri105}  
         \date{\today}
\newtheorem*{conjecture}{Conjecture}
\newtheorem*{definition}{Definition}
\newtheorem*{theorem}{Theorem}
\newtheorem*{corollary}{Corollary}
\newtheorem*{question}{Question}
\def\cA{\mathcal A}\def\cB{\mathcal B}\def\cC{\mathcal C}
\def\CC{\mathbb C}
\def\RR{\mathbb R}\def\SS{\mathbb S}
\def\ZZ{\mathbb Z}
\def\acted{\hspace{.1cm}{\setlength{\unitlength}{.30mm}\linethickness{.09mm}
                        \begin{picture}(8,8)(0,0)\qbezier(1,6)(3.5,8.3)(6,7)\qbezier(6,7)(9.5,4)(6,1)\qbezier(6,1)(3.5,-.3)(1,2)
                                                 \qbezier(1,6)(1.9,7.5)(1.2,9)\qbezier(1,6)(3,6.1)(3.8,4.4)
                        \end{picture}\hspace{.1cm}}}
\def\Aut{\mathrm{Aut}}
\def\cl{\mathit{Cliff}\!}
\def\Cl{\mathit{Cliff}\!}
\def\CN{\mathit{CN}}
\def\Diff{\mathit{Diff}}
\def\Fer{\mathit{Fer}}
\def\Hom{\mathrm{Hom}}
\def\KO{\mathit{KO}}
\def\Spin{\mathit{Spin}}
\def\String{\mathit{String}}
\def\TMF{\mathit{TMF}}
\def\MF{\mathit{MF}}
\def\cC{\mathcal{C}}
\def\vN{\mathit{vN}}
\def\Alg{\mathit{Alg}}
\def\ra{\rightarrow}
\def\Rep{\mathrm{Rep}}
\def\id{\mathrm{id}}
\begin{document}

\maketitle

\subsection*{1. Introduction}

The generalized cohomology theory $K$-theory has had an especially rich history in large part because it admits a geometric definition in terms of vector bundles.  This definition has provided a framework for a spectacular array of generalizations of $K$-theory, including equivariant $K$-theory, algebraic $K$-theory, twisted $K$-theory, $L$-theory, and $KK$-theory, among others, with applications to such diverse topics as indices of elliptic operators, classification of manifolds, topology of diffeomorphism groups, geometry of algebraic cycles, and classification of $C^*$-algebras.

Since its introduction by Hopkins and Miller, the cohomology theory $\TMF$, topological modular forms, has been seen as a higher form of $K$-theory---here `higher' has been variously interpreted to refer to categorification or taking loop spaces, though at root it refers to the increase of the exponent controlling the formal group laws associated to the cohomology theory.  As the present definitions of $\TMF$ are predominantly homotopy-theoretic in character~\cite{[HMi],[HMa],[Lu]}, it has been a persistent dream of algebraic topologists to develop a geometric definition for $\TMF$ analogous to the one for $K$-theory.  Various potential definitions have been investigated~\cite{[BDR],[HK],[Se],[ST1]}, particularly ones utilizing bundles of linear categories, or considering vector bundles on loop spaces, or involving models for quantum and conformal field theory.  A particularly concise, if as yet ill-defined, proposal for such a geometric model, is the $\Diff(S^1)$-equivariant $K$-theory of the free loop space:
\[
\TMF^*(M) \overset{?}{\simeq} K^*_{\Diff(S^1)}(LM).
\]
\noindent A $\Diff(S^1)$-action on a vector bundle, over a free loop space, can be interpreted as a shadow of an action by a category of geometric surfaces, which itself is closely related to the structures appearing in a two-dimensional conformal field theory.  Building primarily on work of Segal and Stolz-Teichner, we propose that conformal nets, a mathematical model for certain conformal field theories, could play a prominent role in a field-theoretically-inspired geometric definition of $\TMF$.

Though the $0^{\text{th}}$ group $K^0(X)$ of the generalized cohomology theory $K$-theory can be described in terms of equivalence classes of vector bundles on the space $X$, to geometrically describe all the groups $K^n(X)$ requires considering bundles of modules for the Clifford algebras $\Cl(n)$~\cite{[Kar]}.  There is a family of fundamental conformal nets, the free fermions $\Fer(n)$, that are conformal-field-theoretic analogues of the ordinary Clifford algebras.  Moreover, there is a notion of boundary condition for a conformal net that is analogous to the notion of module for an algebra.  We therefore speculate that bundles of fermionic boundary conditions will be the basic underlying objects of a geometric model for $\TMF$.  

As orientations of manifolds and vector bundles are central to the theory of ordinary homology, providing fundamental classes, Thom classes, Euler classes, and Poincar\'e duality, among other structures, so spin structures on manifolds and vector bundles are the key notions of orientation for $K$-theory.  Furthermore, string structures are the basic orientation relevant for the theory of topological modular forms; in particular, manifolds with string structures have $\TMF$-fundamental classes and $\TMF$-Euler classes.

A spin structure on $\RR^n$ can be conveniently encoded in the context of Clifford algebras as an invertible bimodule from $\Cl(n)$ to itself; analogously, a string structure on $\RR^n$ has a natural description, using conformal nets, as an invertible bimodule (called, in this context, an invertible defect) from $\Fer(n)$ to itself.  Leveraging this notion of a string structure, we can construct the basic data of the $\TMF$-Euler class of a string vector bundle.

The 8-fold periodicity of the real $K$-theory groups is intimately related to the 8-fold Morita periodicity of the real Clifford algebras, and we imagine there is a similar relationship between the 576-fold periodicity of $\TMF$ and an appropriate notion of periodicity for the free fermion nets.  Specifically, we conjecture that the net of $n$ fermions is equivalent, in the 3-category of conformal nets, to the net of $n+576$ fermions.  Using an action of the stable homotopy groups of spheres on the homotopy of the 3-category of conformal nets, we establish a lower bound on the periodicity of the fermions, proving that if $n$ fermions is equivalent to $n+k$ fermions, then 24 must divide $k$.

\subsection*{2. Topological modular forms}

We discuss a few facts about the ring of topological modular forms, particularly concerning its periodicity and relationship to the ring of ordinary modular forms.  The reader should refer to Goerss' article~\cite{[Go]} for a thorough overview of the field, and to Behrens' paper~\cite{[Be]} and Bauer's paper~\cite{[Ba]} for details about, respectively, the construction of $\TMF$ and the computation of its homotopy.

Associated to each elliptic curve $C$ there is an elliptic cohomology theory $E_C$, and in an appropriate sense, the cohomology theory $\TMF$ functions as a universal elliptic cohomology theory.  This universal property provides a map from the ring of topological modular forms, that is from the $\TMF$-cohomology of a point, to the ring of modular forms, that is to the ring of sections of certain line bundles over the moduli stack of elliptic curves.  The ring of modular forms $\MF_{\!*}$ is $\ZZ[c_4, c_6, \Delta^{\pm1}]/(c_4^3-c_6^2-1728\,\Delta)$---see for instance Deligne's computation~\cite{[De]}---and the map
\begin{equation}\nonumber
\phi: \TMF_{\!*}(\mathit{pt})\,\,\to\,\, \MF_{\!*}
\end{equation}
is a rational isomorphism.  In fact, the kernel and cokernel of the map $\phi$ are both 2- and 3-torsion groups.  Note that $\phi$ is grading-preserving, provided we take the degrees of $c_4$, $c_6$, and $\Delta$ to be 8, 12, and 24 respectively; these are twice the classical gradings of those elements.

Because the discriminant $\Delta \in \MF_{\!*}$ is invertible, the ring of modular forms is periodic of period 24; multiplication by $\Delta$ provides the periodicity isomorphism from $\MF_n$ to $\MF_{n+24}$, for any $n$.  There is no topological modular form lifting the modular form $\Delta$, and so the periodicity of modular forms is not reproduced directly in $\TMF$.  However, the power $\Delta^{24}$ does lift to a topological modular form, and the ring $\TMF_{\!*}$ is therefore periodic of period $24 \times 24 = 576$.

This situation comparing modular forms to topological modular forms, in which a classical periodicity is dilated, is entirely analogous to the situation in $K$-theory.  One construction of real $K$-theory is as follows: to each curve carrying a formal group structure isomorphic to the multiplicative formal group, there is an associated cohomology theory equivalent to complex $K$-theory; real $K$-theory is, in an appropriate sense, the universal cohomology theory mapping to all the resulting versions of complex $K$-theory.  This universal property provides a map from the coefficient ring $\KO_{\!*}(\mathit{pt})$ of real $K$-theory to the ring of sections of certain line bundles over the moduli stack of multiplicative curves.  That ring of sections is $\ZZ[a^{\pm 1}]$, where the periodicity generator $a$ has degree 4.  The map
\[
\psi: \KO_*(pt) \,\,\to\,\, \ZZ[a^{\pm1}]
\]
is a rational isomorphism, and in fact has kernel and cokernel being 2-torsion groups.  Though the generator $a$ is not in the image of the map $\psi$, the power $a^2$ does lift to a class in real $K$-theory; thus, real $K$-theory is periodic of period $4 \times 2 = 8$.

As listing all 576 coefficient groups of $\TMF$ might tax the reader's attention, we write out these groups through degree 28, to give a sense of their structure in this range and of their relation to the entire first period of the ring of classical modular forms. \bigskip

\centerline{\parbox{13.5cm}{
\centerline{\small The homotopy groups of $\TMF$ in low degrees:}\smallskip
\centerline{
\begin{tabular}{|c|c|c|c|c|c|c|c|c|c|}\hline
$\pi_0$ & $\pi_1$ & $\pi_2$ & $\pi_3$ & $\pi_4$ & \,$\pi_5$\, & $\pi_6$ & \,$\pi_7$\, & $\pi_8$ & $\pi_9$ \\\hline
\,$\ZZ[x]$\, & $\ZZ/2[x]$\hspace{.15mm} & $\ZZ/2[x]$ & $\ZZ/24$ & $\ZZ[x]$ & $0$ & $\ZZ/2$ & $0$ & $\ZZ[x]\oplus \ZZ/2$ & $\ZZ/2[x]\oplus \ZZ/2$ \\\hline
\end{tabular}}\smallskip
\centerline{
\begin{tabular}{|c|c|c|c|c|c|c|c|c|c|}\hline
$\pi_{10}$ & $\pi_{11}$ & $\pi_{12}$ & $\pi_{13}$ & $\pi_{14}$ & $\pi_{15}$ & $\pi_{16}$ & $\pi_{17}$ & $\pi_{18}$ & $\pi_{19}$ \\\hline
$\ZZ/2[x]\oplus \ZZ/3$ & $0$ & $\ZZ[x]$ & $\ZZ/3$ & \,$\ZZ/2$\, & \,$\ZZ/2$\, & \,$\ZZ[x]$\hspace{.4mm} & $\ZZ/2[x]\oplus \ZZ/2$ & $\ZZ/2[x]$ & $0$ \\\hline
\end{tabular}}\smallskip
\centerline{
\begin{tabular}{|c|c|c|c|c|c|c|c|c|}\hline
$\pi_{20}$ & $\pi_{21}$ & $\pi_{22}$ & \!$\pi_{23}$\! & $\pi_{24}$ & $\pi_{25}$ & $\pi_{26}$ & $\pi_{27}$ & $\pi_{28}$ \\\hline
$\ZZ[x]\oplus \ZZ/24$ & $\ZZ/2$ & $\ZZ/2$ & $0$ & $24\ZZ+x\ZZ[x]$ & $\ZZ/2[x]$ & $\ZZ/2[x]$ & $\ZZ/12$ & $\ZZ[x]\oplus \ZZ/2$ \\\hline
\end{tabular}}\smallskip
}}
\bigskip

\noindent This table indicates that $\pi_0(\TMF)$ is the ring $\ZZ[x]$, and the remaining homotopy groups are described as $\pi_0(\TMF)$-modules.  The element $x \in \pi_0(\TMF)$ maps, under $\phi$, to the modular form $c_4^3 \, \Delta^{-1}$.  The $24^\text{th}$ homotopy group is isomorphic to the ideal of $\ZZ[x]$ generated by $24$ and $x$; as a $\ZZ[x]$-module, it has two generators, which we might suggestively call $[24\Delta]$ and $[c_4^3]$ subject to the relation $x[24\Delta] = 24[c_4^3]$.  Besides $[24\Delta], [c_4^3] \in \pi_{24}(\TMF)$, the only elements in this range that map nontrivially into $\MF_{\!*}$ are the $\ZZ[x]$-module generators of the $\ZZ[x]$ factor in degrees $0, 4, 8, 12, 16, 20, 28$, which hit respectively $1, 2 c_6 c_4^2 \Delta^{-1}, c_4, 2c_6, c_4^2, 2c_6 c_4, 2 c_6 c_4^2 \in \MF_{\!*}$.

\subsection*{3. Conformal nets}

The geometric model for $K$-theory depends not only on the existence of the Clifford algebras $\Cl(n)$, but also on having the Clifford algebras situated in a 2-category of algebras, bimodules, and intertwiners---this 2-category provides the crucial notions of Clifford modules and Morita equivalence.  As part of their program to develop a geometric model for $\TMF$ as a space of two-dimensional quantum field theories, Stolz and Teichner~\cite{[ST1]} realized they would need a 3-category encoding some kind of `higher Clifford algebras'.  In order that this 3-category interact well with existing notions of one-dimensional quantum field theories, they hoped the 3-category would have a proscribed relationship to the 2-category of von Neumann algebras over the complex numbers.  Specifically, they asked the following:
\begin{question} (Stolz-Teichner, 2004)
Is there an interesting symmetric monoidal 3-category $\cC$ delooping the 2-category $\vN$ of von Neumann algebras, in the sense that $\Hom_{\cC}(1,1) = \vN$?
\end{question}
\noindent As it is certainly possible to take a connected deloop $B\vN$ of $\vN$, in this question `interesting' amounted to wanting there to be natural objects of $\cC$ besides the unit object, ideally a family of objects with periodicity properties analogous to those of Clifford algebras.

In the forthcoming papers~\cite{[BDH123]}, we introduce a composition operation on defects between conformal nets and show that conformal nets themselves form a 3-category delooping von Neumann algebras.  Conformal nets are one of the existing mathematical models for conformal field theory; traditionally, a conformal net is defined (with inspiration from algebraic quantum field theory) as a collection of algebras of operators associated to the subintervals of the standard circle.  There are many other formalisms for conformal field theory, for instance vertex algebras~\cite{[FBZ]}, chiral algebras~\cite{[BD]}, and algebras over (partial) operads~\cite{[Se],[Hu],[FRS]}.  Among these frameworks, conformal nets provide a particularly appropriate context for investigating the higher categorical structure of conformal field theory.  As such, we borrow terminology from conformal field theory in naming the 1-, 2-, and 3-morphisms of our 3-category $\CN$ of conformal nets; the following table lists a few of the key pieces of categorical structure, and the associated names: \bigskip

\centerline{
\begin{tabular}{|c||c|}
\hline
\multicolumn{2}{|c|}{$\phantom{\Big|}$The 3-category of conformal nets}\\\hline\hline
$\phantom{\Big|}$Objects&Conformal nets\\\hline
$\phantom{\Big|}$Arrows, $\cA\to \cB$&Defects between the nets $\cA$ and $\cB$\\\hline
$\phantom{\Big|}$Arrows from $\cA$ to the unit object 1&Boundary conditions for $\cA$\\\hline
$\phantom{\Big|}$2-morphisms between arrows,
$\cA
{\hspace{.1cm}{\setlength{\unitlength}{.50mm}\linethickness{.09mm}
\begin{picture}(8,8)(0,0)\qbezier(0,4)(4,7)(8,4)\qbezier(0,1)(4,-2)(8,1)\qbezier(3.5,4)(3.5,3)(3.5,1.5)
\qbezier(4.5,4)(4.5,3)(4.5,1.5)\qbezier(4,0.8)(4.5,1.7)(5.5,2)\qbezier(4,0.8)(3.5,1.7)(2.5,2)
\qbezier(8,1)(7.4,.2)(7.7,-.7)\qbezier(8,1)(7,1)(6.5,1.5)\qbezier(8,4)(7.4,4.8)(7.7,5.7)
\qbezier(8,4)(7,4)(6.5,3.5)\end{picture}\hspace{.1cm}}}
\cB$
&Sectors between $\cA$-$\cB$-defects\\\hline
$\phantom{\Big|}$2-morphisms from $\mathrm{id}_\cA\!:\!\cA\to \cA$ to itself& Representations of $\cA$\\\hline
$\phantom{\Big|}$3-morphisms between 2-morphisms& Intertwiners of sectors\\\hline
\end{tabular}} \bigskip

Before describing nets and defects in more detail, we mention the kind of 3-categorical structure we consider.  On the one hand, the higher categorical structure of conformal nets is somewhat stricter than a maximally weak symmetric monoidal 3-category; on the other hand, conformal nets naturally have a bit more structure than would be recorded in an ordinary 3-category.  We address both these features by building the 3-categorical structure of conformal nets as an internal bicategory in the 2-category of symmetric monoidal categories.  Such internal bicategories, described in [BDH1], distinguish two kinds of 1-morphisms and two kinds of 2-morphisms.  This is analogous to the notion of bicategory given by internal categories in the 2-category of categories, investigated by Shulman~\cite{[Sh]}, which records two distinct kinds of 1-morphisms; for instance, in the bicategory of algebras, the ring homomorphisms and the bimodules between algebras are two structurally distinct types of morphisms.  In the internal bicategory of conformal nets, the 1-morphisms are split into the homomorphisms of nets and the defects between nets, and the 2-morphisms are split into the homomorphisms of defects and the sectors between defects.

As mentioned above, the classical notion of conformal net~\cite{[Kaw],[GF],[Lo]} focuses on a collection of algebras associated to subintervals of the standard circle.  In order to facilitate the construction of the higher categorical structure of nets, we introduce instead a notion of coordinate-free conformal net [BDH2], in which the collection of algebras is indexed by abstract intervals, without reference to the circle.  This more flexible notion is reminiscent of structures that have been considered in algebraic quantum field theory on curved 4-dimensional space-times~\cite{[BFV]}.
\begin{definition}
\label{def:conformal-net}
  A coordinate-free conformal net is a continuous 
  covariant functor
\[
\cA \colon \left\{\parbox{3.2cm}{\center \rm oriented intervals, embeddings}\right\} \to \left\{\parbox{4.2cm}{\center \rm von Neumann algebras, injective linear maps}\right\}
\]
taking orientation-preserving embeddings to homomorphisms and orientation-reversing embeddings to antihomomorphisms; here an `interval' is a 1-manifold diffeomorphic to $[0,1]$.
The functor $\cA$ is subject to the following conditions, for $I$ and $J$ subintervals of the interval $K$: 
\begin{list}{$\circ$}{\leftmargin=3ex \rightmargin=2ex \labelsep=1ex \labelwidth=1ex \itemsep=0ex \topsep=1ex}
   \item \emph{Locality:} If $I,J\subset K$ 
     have disjoint interiors, then the images of $\cA(I)$ and $\cA(J)$ are commuting subalgebras of $\cA(K)$.
   \item   \emph{Strong additivity:} If $K = I \cup J$, 
      then the algebra $\cA(K)$ is topologically generated by $\cA(I)$ and $\cA(J)$.
   \item \emph{Split property:} 
      If $I,J\subset K$ are disjoint, 
      then the map from the algebraic tensor product 
      $\cA(I) \otimes_{\mathit{alg}} \cA(J) \to \cA(K)$ extends
      to the spatial tensor product.
   \item \label{def:conformal-net:inner}
      \emph{Inner covariance:}
      If $\varphi \colon I \to I$ is a 
      diffeomorphism that restricts to the 
      identity in a neighborhood of $\partial I$,
      then $\cA(\varphi)$ is an inner automorphism of $\cA(I)$.
   \item \label{def:conformal-net:vaccum} 
      \emph{Vacuum sector:} 
      Suppose $J \subsetneq I$ contains the boundary point
      $p \in \partial I$. Let $\bar{J}$ denote $J$ with the reversed 
      orientation. $\cA(J)$ acts on $L^2(\cA(I))$ 
      via the left action of $\cA(I)$, and 
      $\cA(\bar{J}) \cong \cA(J)^\mathit{op}$ acts on $L^2(\cA(I))$ 
      via the right action of $\cA(I)$.
      We require that the action of
      $\cA(J) \otimes_{\mathit{alg}} \cA( \bar{J} )$ on $L^2(\cA(I))$
      extends to an action of $\cA({J \cup_p \bar{J}})$.
\end{list}
\end{definition}

\noindent In the vacuum sector property in this definition, $L^2(A)$ refers to the Haagerup space, that is the standard form, of the von Neumann algebra $A$~\cite{[Ha]}---this is a Hilbert space, equipped with the structure of an $A$-$A$-bimodule, that functions as an identity bimodule for Connes fusion over $A$.  We do not explicitly demand the existence of a vacuum vector, nor, more dramatically, do we require that the net be positive energy.  Note that classical conformal nets can be promoted to give examples of coordinate-free conformal nets.  Another source of examples is Minkowskian conformal field theories: nets of algebras on 2-dimensional Minkowski space-time provide (non-positive-energy) conformal nets when restricted to a spatial slice~\cite{[KLM],[LR]}.

Morphisms between coordinate-free conformal nets are called defects, and their definition is inspired by the notions of topological and conformal defects in quantum and conformal field theory.
\begin{definition}
Let $\cA$ and $\cB$ be coordinate-free conformal nets.
An $\cA$-$\cB$-defect is a functor
\[
D \colon \left\{\parbox{5.4cm}{\center \rm intervals in $\RR$ whose boundary does not contain $0$, inclusions}\right\} \to \left\{\parbox{4.1cm}{\center \rm von Neumann algebras, homomorphisms}\right\}
\]
such that $D|_{\RR_{> 0}}$ is given by $\cA$, and $D|_{\RR_{< 0}}$ is given by $\cB$.
It is subject to the following conditions:
\begin{list}{$\circ$}{\leftmargin=3ex \rightmargin=1ex \labelsep=1ex \labelwidth=1ex \itemsep=0ex \topsep=1ex}
\item   \emph{Isotony:} If $I\subset J$ and $0\in I$, then the corresponding map $D(I)\to D(J)$ is injective. 
\item \emph{Locality:} If $I\cap J$ is a point, then the images of $D(I)$ and $D(J)$ commute in $D(I\cup J)$.
\item   \emph{Strong additivity:} $D(I\cup J)$ is topologically generated by $D(I)$ and $D(J)$.
\end{list}
The defect functor $D$ is also subject to a version of the vacuum sector axiom, which we do not reproduce here.
\end{definition}

Defects are the analogs for conformal nets of bimodules between algebras.  The basic underlying objects in a geometric model for $K$-theory are bundles, not of bimodules but of modules for Clifford algebras.  When one of the two nets $\cA$ or $\cB$ is trivial (that is, is the constant functor $I \mapsto \CC$), the notion of a defect specializes to the conformal-net analog of a module.  This notion is called a boundary condition for the net, and is given as follows.
\begin{definition}
A boundary condition for the coordinate-free conformal net $\cA$ is a functor
\[
D \colon \left\{\parbox{3cm}{\center \rm intervals in $\RR_{\ge 0}$, inclusions}\right\} \to \left\{\parbox{4.1cm}{\center \rm von Neumann algebras, homomorphisms}\right\}
\]
whose restriction to $\RR_{> 0}$ is given by $\cA$.
It must satisfy isotony, locality, strong additivity, and vacuum sector conditions; the last of these is:
\begin{list}{$\circ$}{\leftmargin=3ex \rightmargin=1ex \labelsep=1ex \labelwidth=1ex \itemsep=0ex \topsep=1ex}
\item \emph{Vacuum sector:}
     Suppose $0\in I\subset \RR_{\ge 0}$. Let $J$ be a subinterval of $I$ that contains the boundary point
      $p \in \partial I$, $p\not = 0$. The algebras $\cA(J)$ and $\cA(\bar{J})$ act on $L^2(D(I))$ 
      via the left and right action of $D(I)$, respectively.
      We require that the action of
      $\cA(J) \otimes_{\mathit{alg}} \cA( \bar{J} )$ on $L^2(D(I))$
      extends to an action of $\cA(J \cup_p \bar{J})$.
\end{list}
\end{definition}

\noindent Note that defects and boundary conditions are well established in the conformal field theory literature~\cite{[FRS],[LR]}.  Previous notions do not, however, permit boundary conditions for chiral conformal field theories, as the above notion does; the basic conformal nets we will use in our investigation of $\TMF$ will necessarily be chiral CFTs.

The notion of boundary condition for a net allows us to consider bundles of boundary conditions as potential geometric representatives of $\TMF$-cohomology classes.  To investigate this idea further, we need a family of nets that functions as an analog of the Clifford algebras.  Indeed we can construct such a family, namely the free fermions.

\subsection*{4. $\KO$-theory and Clifford algebras, $\TMF$ and fermions}

Both in the homotopy-theoretic construction~\cite{[HMi]} and in the field-theoretic approach to cohomology theories~\cite{[ST1]}, there are vivid parallels between real $K$-theory, that is $\KO$, and $\TMF$.  As mentioned above, a crucial ingredient in a geometric understanding of $\KO$ is the family $\Cl(n)$ of Clifford algebras.  We propose that the free fermion coordinate-free conformal nets $\Fer(n)$ play a role for $\TMF$ analogous to the role of Clifford algebras for $\KO$.  Note that the relevant Clifford algebras are naturally $\ZZ/2$-graded algebras, and the fermion nets must similarly be considered as $\ZZ/2$-graded conformal nets---see~\cite{[DH]} for a precise definition of the $\ZZ/2$-graded version of nets.

The family of Clifford algebras has three essential properties: 1) it extends to a functor from real vector spaces to algebras, so in particular the $n^{\text{th}}$ algebra $\Cl(n)$ carries an action of the $n^{\text{th}}$ orthogonal group; 2) this functor is exponential, in that $\Cl(V \oplus W) = \Cl(V) \otimes \Cl(W)$; 3) these algebras encode the fundamental transformation group for $\KO$, in the sense that the automorphism group of a vector space $V$ equipped with a Morita equivalence between $\Cl(V)$ and $\Cl(n)$ is the spin group $\Spin(V)$.  We would like our fermion family of nets to satisfy analogous properties.

As a net, the fermion $\Fer(1)$ provides an algebra $\Fer(1)(I)$ for each interval $I$; this algebra $\Fer(1)(I)$ is defined, roughly, as the completion in a Fock representation of the Clifford algebra of spinor-valued $L^2$ functions on the interval $I$.  The family $\Fer(n)$ of nets does indeed satisfy our three desired properties: 1) by tensoring the spinors with a vector space, the construction produces a functor from vector spaces to nets, $V \mapsto \Fer(V)$; 2) because the Clifford algebra construction on $L^2$ functions is exponential, the fermion construction itself is exponential; 3) the fermions encode the fundamental transformation group for $\TMF$, namely the string group, in the sense that the automorphism group of a vector space $V$ equipped with an invertible defect between $\Fer(V)$ and $\Fer(n)$ is the string group $\String(V)$.  This last property is described in more detail in the next section. \vspace{1ex}

\centerline{
\begin{tabular}{|c|c|}\hline
$\phantom{\Big|}$ Clifford algebra $\cl(n)$ $\phantom{\Big|}$
&
Free fermion $\Fer(n)$\\\hline\hline
$\phantom{\Big|}$ 
$\cl(n)$ has an action of $O(n)$
$\phantom{\Big|}$
& $\Fer(n)$ has an action of $O(n)$\\\hline
$\phantom{\Big|}$ 
$\cl$ is an exponential functor:
$\phantom{\Big|}$
& 
$\Fer$ is an exponential functor: \\
$\cl(V\oplus W)=\cl(V)\otimes\cl(W)$
& 
$\Fer(V\oplus W)=\Fer(V)\otimes\Fer(W)_{\phantom{|_{|_|}}}\!\!\!\!$\\\hline
$\phantom{\Big|}$ 
$\cl(n)$ can be used to define $\Spin(n)$
$\phantom{\Big|}$
&
$\phantom{\Big|}$
$\Fer(n)$ can be used to define $\String(n)$
$\phantom{\Big|}$
\\\hline
\end{tabular}} 
\vspace{1ex} 

Roughly speaking, a class in the $n^{\text{th}}$ real $K$-theory of a space $X$, that is $\KO^n(X)$, is represented by a bundle of $\Cl(n)$-modules over $X$.  A $\Cl(n)$-module can be viewed as a homomorphism, in the 2-category $\Alg$ of algebras, from the Clifford algebra to the trivial algebra; that is, the module is an element of $\Hom_{\Alg}(\Cl(n),1)$.  As the fermions are the $\TMF$ analogs of Clifford algebras, we now consider homomorphisms in the 3-category $\CN$ of nets from the fermions to the trivial net; these elements of $\Hom_{\CN}(\Fer(n),1)$ are boundary conditions for the fermions.  Bundles of such boundary conditions provide, we believe, the core data for a representative of a $\TMF$-cohomology class. \vspace{1ex} 

\centerline{\!\!
\begin{tabular}{|c||c|c|}
\hline
$\phantom{\Big|}$ 
Cohomology theory
$\phantom{\Big|}$ 
&$\KO^*$&$\TMF^*$\\\hline
$\parbox{5cm}{\centerline{The cohomological}
\centerline{degree is controlled by}}^{\phantom{|^{|^I}}}_{\phantom{\big |_|}}$\!\!\!\!\!
&$\parbox{3.7cm}{\centerline{The Clifford algebras}
\centerline{$\cl(n)$}}$
&$\parbox{3cm}{\centerline{The free fermion}
\centerline{conformal nets $\Fer(n)$}}$
\\\hline
$\parbox{5.3cm}{\centerline{Cohomology classes}\vspace{-.05cm} 
\centerline{of degree $n$ are represented by}}^{\phantom{|^{|^|}}}_{\phantom{\big |_|}}$\!\!\!\!&
$\parbox{3cm}{\centerline{Bundles of}\vspace{-.05cm} 
\centerline{$\cl(n)$-modules}}$ &
$\parbox{4.8cm}{\centerline{Bundles of}\vspace{-.05cm} 
\centerline{$\Fer(n)$-boundary conditions}}$
\\\hline
\end{tabular}}
\vspace{1ex} 

It is not, in fact, the case that any class in $\KO^*(X)$ can be represented by a bundle of $\Cl(n)$-modules.  Describing a complete geometric definition of $\KO$ involves elaborating and modifying the idea of bundles of modules, in one of a few possible directions: for instance, one can consider bundles of Hilbert spaces with Clifford algebra actions and compatible fiberwise Fredholm operators, or one can use quasibundles (that is finite-dimensional but not-necessarily locally trivial bundles) of Clifford modules.  A complete geometric description of $\TMF$ classes will certainly involve analogous refinements to the notion of bundles of fermionic boundary conditions.  Though what refinements are needed is at present unresolved, one can use the geometric field-theoretic viewpoint of Stolz and Teichner~\cite{[ST2]} as a source of clear inspiration concerning potential directions for investigation.  Indeed, when we consider the Stolz-Teichner perspective (that $\TMF$ classes should be bundles of twisted two-dimensional geometric field theories) in the context of conformal nets and fermions, the bundle of field theories when evaluated on points would produce a bundle of fermionic boundary conditions.  The kind of information encoded in the 1- and 2-dimensional parts of such field theories provides a guide to potential elaborations of the notion of bundles of boundary conditions.

\subsection*{5. String structures and the $\TMF$-Euler class}

String manifolds have $\TMF$-fundamental classes~\cite{[AHR]}, and in this sense string structures play a role for $\TMF$-cohomology analogous to that played by spin structures for $\KO$ or by orientations for ordinary cohomology.  In the paper~\cite{[DH]} we provide the first direct connection between the conformal net of free fermions and $\TMF$ by proving that the fermions elegantly encode the notion of a string structure on a vector bundle.

Recall that the string group $\String(n)$ is a topological group whose homotopy type is the 3-connected cover of $O(n)$.  It fits into a tower of connective covers of the orthogonal group, obtained by successively killing the lowest remaining homotopy group:
\[
O(n)\underset{\substack{\uparrow\\\text{kill $\pi_0$}}}{\longleftarrow} 
SO(n) \underset{\substack{\uparrow\\\text{kill $\pi_1$}}}{\longleftarrow} 
\Spin(n) \underset{\substack{\uparrow\\\text{kill $\pi_3$}}}{\longleftarrow} 
\String(n).
\]  
There are a number of existing models for the string group, for instance those in~\cite{[ST1],[BCSS],[Wa],[SP]}.  

From a homotopy theoretic perspective, an orientation on a vector bundle is a lift of the classifying map of the bundle from $BO(n)$ to $BSO(n)$; similarly a spin structure or string structure is a lift of the classifying map from $BO(n)$ to respectively $B\Spin(n)$ or $B\String(n)$.  A more concrete geometric notion of orientation is the following: an orientation on a vector bundle is a trivialization of the top exterior power of the bundle.  Not surprisingly, for geometric applications, having this geometric characterization of the notion of orientation is essential.  As we will describe presently, such a characterization exists for spin structures, and one of the contributions of our fermionic approach in~\cite{[DH]} is the construction of an analogously geometric characterization of string structures.

Let $V$ be a real $n$-dimensional vector space equipped with an inner product.  A typical description of a spin structure on $V$ is as a nontrivial double cover of the oriented orthonormal frame bundle of $V$.  This description is reasonably concrete, but not entirely geometric because double covers are themselves a rather topological construction.  We can refine this double-cover notion of spin structure by specifying a geometric structure $S$ associated to the vector space $V$ such that the natural projection map $\pi: \Aut(V,S) \ra \Aut(V)$, from the automorphism group of the pair $(V,S)$ to $\Aut(V) \!=\! SO(V)$, is a nontrivial double cover.  An example of such a structure $S$ is an invertible $\Cl(V)$-$\Cl(n)$ bimodule, that is a Morita equivalence between the Clifford algebras of $V$ and of $\RR^n$; we also insist that this invertible bimodule $S$ is equipped with an inner product and that automorphisms of $S$ be unitary.  To see that the automorphism group of this structure is as desired, observe that
\[
\ker\big(\pi\!:\Aut(V,S) \to \Aut(V)\big)=\Aut_{\cl(V)\text{-}\cl(n)\text{-bimod}}\big(S\hspace{.2mm}\big)=\{\pm 1\}.
\]
A computation of the boundary homomorphism of the sequence $\{\pm 1\} \ra \Aut(V,S) \ra \Aut(V)$ shows that the cover is nontrivial.

The main theorem of~\cite{[DH]} shows that replacing the Clifford algebras by the free fermions in the above description does indeed produce a characterization of a string structure.  Specifically, a string structure on a vector space $V$ is an invertible $\Fer(V)$-$\Fer(n)$ defect $D$.  Here `invertible' must be interpreted as meaning weakly invertible in the 3-category $\CN$ of conformal nets; said differently, the structure is an equivalence in $\CN$ between $V$-fermions $\Fer(V)$ and $n$-fermions $\Fer(n)$.  As in the spin case, to verify that the structure is as desired, one considers the kernel of the projection to the special orthogonal group:
\[
\ker\big(\pi\!:\Aut(V,D) \to \Aut(V)\big) = \Aut_{\Fer(V)\text{-}\Fer(n)\text{-defect}}\big(D\big) 
= \{\text{$\ZZ/2$-gr complex lines}\}.
\]
This kernel $K$ is not a group but a groupoid, and its homotopy type is $\ZZ/2 \times BS^1$, as we would expect for the kernel of the projection from $\String(V)$ to $SO(V)$.  An elaborate computation shows that the boundary homomorphism $\pi_3(SO(n)) \ra \pi_2(K)$ is an isomorphism, for $n$ at least 5, and this implies that the homotopy type of $\Aut(V,D)$ is indeed that of $\String(V)$.

These geometric models for spin and string structures allow a particularly direct description of, respectively, the $\KO$-Euler class of a spin vector bundle and the $\TMF$-Euler class of a string vector bundle.  Let $V$ be a vector bundle over $X$ equipped with a spin structure $S$; that is, $S$ is a bundle over $X$ whose fiber at $x \in X$ is an invertible $\Cl(V_x)$-$\Cl(n)$ bimodule $S_x$.  Recall that certain $\KO^n(X)$ classes can be represented by bundles of $\Cl(n)$-modules.  The $\KO$-Euler class $e_V \in \KO^n(X)$ of the bundle $V$ is represented by the bundle $S$, viewed simply as a right $\Cl(n)$-module bundle:
\[
e_V = \big[S\acted \cl(n)\big]\in \KO^n(X).
\]
That is, to obtain the Euler class of $V$ from the spin structure $S$, we only need to forget the left $\Cl(V)$ action on $S$.

To adapt this description to the $\TMF$-Euler class $e_V \in \TMF^n(X)$ of a string vector bundle $V$ over $X$, we need to know how to take a $\Fer(V)$-$\Fer(n)$-defect and `forget the action of $\Fer(V)$'.  This can be accomplished as follows:
\def\overharp{\overset{\raisebox{-1mm}{$\scriptstyle\rightharpoonup$}}} 
\begin{definition}
Given an $\cA$-$\cB$-defect $E$,
we let $\overharp{E}:\{$\rm intervals in $\RR_{\ge 0}\}\to\{$vN algebras$\}$ \it be the boundary condition for $\cB$ 
given by
\[
\overharp{E}(I):=\begin{cases}
\cB(I)&\text{\rm if\,\, $0\not\in I$,}\\
E\big([-1,0] \cup I\big)&\text{\rm if\,\, $0\in I$.}
\end{cases}
\]
\end{definition}
\noindent Now let $V$ be a vector bundle equipped with a string structure $D$; that is, $D$ is a bundle over $X$ whose fiber at $x \in X$ is an invertible $\Fer(V_x)$-$\Fer(n)$ defect $D_x$.  Forgetting the $\Fer(V_x)$ action on each fiber produces a bundle $\overharp{D}$ of $\Fer(n)$-boundary conditions, with fibers $\overharp{D_x}$.  We expect this bundle $\overharp{D}$ of boundary conditions, perhaps with additional data, will represent the $\TMF$-Euler class $e_V \in \TMF^n(X)$ of the string vector bundle $V$.

\subsection*{6. Periodicity of the fermions}

There are two distinct notions of equivalence between algebras: algebra isomorphism and Morita equivalence.  The first notion arises by viewing algebras as the objects of a 1-category whose morphisms are algebra homomorphisms.  The second notion arises by viewing algebras as the objects of a 2-category whose morphisms are bimodules and whose 2-morphisms are maps of bimodules.

There are, analogously, two distinct notions of equivalence between conformal nets.  The first notion, isomorphism of nets, arises by considering nets as objects of a 1-category, with morphisms the natural transformations of the functors defining the nets.  The existence of a 3-category of nets provides a completely new, second notion of equivalence of nets, which we refer to as $\CN$-equivalence: two nets $\cA$ and $\cB$ are $\CN$-equivalent if there exist defects $D\!: \cA \ra \cB$ and $E\!: \cB \ra \cA$ such that both composites $D E$ and $E D$ are Morita equivalent to the identity defects.

Though the essential nature and properties of $\CN$-equivalence are as yet enigmatic, we can establish certain relationships between the notion of $\CN$-equivalence and the representation categories of nets.  Note that a representation of a net $\cA$ is a Hilbert space equipped with compatible actions of the algebras $\cA(I)$ for $I$ a subinterval of the standard circle.  The representation category $\Rep(\cA)$ of a net $\cA$ can be expressed in purely categorical terms within the 3-category $\CN$ as
\[
\Rep(\cA) = \Hom_{\Hom_\CN(\cA,\cA)}(\id_\cA,\id_\cA).
\]
As a result, the $\CN$-equivalence class of a net encodes the representation category, in the sense that if two nets have inequivalent representation categories, then they cannot be $\CN$-equivalent.  $\CN$-equivalence also provides a specific characterization of the kernel of the functor taking a net to its representation category.  Kawahigashi, Longo, and M\"uger~\cite{[KLM]} showed that a conformal net has trivial representation category if and only if the $\mu$-index of the net is equal to one.  We have proven~\cite{[DH]} that a conformal net $\cA$ has $\mu$-index equal to one if and only if it is $\CN$-invertible, that is if there is another net $\cB$ such that $\cA \otimes \cB$ is $\CN$-equivalent to the trivial net $1$.  The fermion net has trivial representation category, so in particular $\Fer(n)$ is an invertible net.

The real Clifford algebra $\Cl(1)$ is (Morita) invertible, in the 2-category of algebras, and it generates a $\ZZ/8$ subgroup of the group of Morita equivalence classes of invertible algebras; that is, for any $n$, there is a Morita equivalence
\[
\cl(n)\,\,\simeq\,\cl(n+8).
\]
This 8-fold periodicity of the Clifford algebras is the direct algebraic correspondent of the Bott periodicity of $\KO$-theory.  In a curious reversal, the periodicity of $\TMF$ is known~\cite{[HMa],[Ba]}, namely $\TMF^n(X) \simeq \TMF^{n+576}(X)$, but the corresponding algebraic periodicity has not yet been established.  On the view that the fermions are the $\TMF$ analogs of the Clifford algebras, we make the following conjecture.
\begin{conjecture}
For every $n$, there exists a $\CN$-equivalence between the conformal nets
$\Fer(n)$ and $\Fer(n+576)$.
\end{conjecture}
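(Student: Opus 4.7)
The plan is to exploit the exponential property $\Fer(V \oplus W) = \Fer(V) \otimes \Fer(W)$ to reduce the conjecture, for arbitrary $n$, to the single case $n = 0$. Since $\Fer(n+576) = \Fer(n) \otimes \Fer(576)$, and each $\Fer(n)$ is $\CN$-invertible (its representation category being trivial), tensoring a hypothetical $\CN$-equivalence $\Fer(576) \simeq 1$ with $\Fer(n)$ produces the desired equivalence $\Fer(n+576) \simeq \Fer(n)$. The problem thus becomes that of producing an invertible boundary condition for $\Fer(576)$, equivalently an invertible defect between $\Fer(576)$ and the trivial net $1$. Note that the central charge of $\Fer(576)$ is $288 = 12 \times 24$, a multiple of $24$, which is compatible with the $24$-divisibility lower bound already established and with the expected chiral anomaly obstruction to trivialization.

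The concrete construction I would attempt is to realize such a defect from a holomorphic conformal field theory of central charge $288$. Natural candidates arise as $12$-fold tensor products of self-dual holomorphic CFTs of central charge $24$ --- for instance, the chiral lattice CFT associated to the Leech lattice, or the chiral Monster CFT --- followed by a fermionization or appropriate orbifold that identifies the resulting net with $\Fer(576)$, thereby producing a trivializing boundary condition. From the Stolz-Teichner perspective, this corresponds to the parallel statement that $\Delta^{24} \in \pi_{576}(\TMF)$ ought to be realized by a genuine two-dimensional conformal field theory, and the sought-after boundary condition would simply encode that field theory in the language of nets.

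The main obstacle will be verifying that any such constructed defect is weakly invertible in the full 3-category $\CN$, rather than invertible only in some coarser sense such as isomorphism of nets or Morita equivalence of bimodules over a single algebra. This requires precise control of Connes fusion of defects together with the 3-categorical coherence developed in \cite{[BDH123]}, and in particular requires exhibiting an explicit inverse defect whose composites realize the unit and counit of an adjoint equivalence up to $\CN$-isomorphism. A secondary difficulty, implicit in the juxtaposition of this conjecture with the $24$-divisibility result, is that a fully satisfactory proof should explain why smaller multiples of $24$ (such as $24$ itself) fail to be periods; this likely requires engaging with the stable-homotopy action used to establish the lower bound, via a secondary invariant that detects the missing factor between the coarse central-charge obstruction and the actual $576$-fold period.
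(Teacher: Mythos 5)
This statement is an open conjecture in the paper: the authors offer no proof of it, only the complementary lower bound (via the $\nu$-invariant) that any period of the free fermion must be divisible by $24$. So there is no paper proof to compare against, and your proposal should be judged as a proof attempt on its own terms --- and as such it has a genuine gap. The reduction to the case $n=0$ is fine: since each $\Fer(n)$ is $\CN$-invertible, tensoring an equivalence $\Fer(576)\simeq 1$ with $\Fer(n)$ and using $\Fer(n+576)=\Fer(n)\otimes\Fer(576)$ does reduce the conjecture to producing a single invertible $\Fer(576)$-boundary condition. But that single case is precisely the entire content of the conjecture, and your proposal only names a candidate source for it rather than constructing it. The central-charge count ($c=288$, a multiple of $24$) is at best a consistency check: equality of central charges, or the existence of holomorphic CFTs of central charge $24$ such as the Leech lattice or Monster theories, does not by itself yield a defect between $\Fer(576)$ and the trivial net in the sense of the definitions in Section 3, let alone one that is weakly invertible in the $3$-category $\CN$. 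The steps ``fermionization or appropriate orbifold that identifies the resulting net with $\Fer(576)$'' and the verification of invertibility via control of Connes fusion of defects are exactly where the unknown mathematics lies; your own final paragraph concedes this, which confirms that what you have is a research program, not a proof.

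Two smaller points. First, the conjecture as stated asserts only the \emph{existence} of a $\CN$-equivalence $\Fer(n)\simeq\Fer(n+576)$; it does not assert that $576$ is the minimal period, so your ``secondary difficulty'' of explaining why $24$ or $48$ fail to be periods is not something a proof of this statement must address (the paper treats minimality as a separate desideratum, combining the mod-$24$ invariant with Drinfel{\textquotesingle}d's mod-$16$ observation to suggest the period is a multiple of $48$). Second, the heuristic appeal to $\Delta^{24}\in\pi_{576}(\TMF)$ being realized by a conformal field theory is motivation of the same kind the authors themselves invoke, but it carries no logical force here: the conjecture is meant to be the net-theoretic \emph{counterpart} of the $576$-fold periodicity of $\TMF$, not a consequence of it, since no functor from $\CN$-equivalence classes of nets to $\pi_*(\TMF)$ (or vice versa) is available that would transport the topological periodicity to the fermions.
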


\noindent In their original question asking for a 3-category delooping von Neumann algebras, as described in section 3, Stolz and Teichner indicated that one desiderata for the 3-category would be that it contain a $\ZZ/576$ subgroup in its group of equivalence classes of invertible objects.  If the fermion has minimal period 576, then the fermion conformal nets would provide such a subgroup.

This fermion periodicity conjecture remains quite mysterious, even from the point of view of physics.  As described in the next section, though, we can establish a lower bound of 24 on the periodicity of fermions.

\subsection*{7. Fermions and stable homotopy}

There is an invariant of invertible conformal nets taking values in the $24^{\text{th}}$ roots of unity in $S^1$---this invariant arises from an action of the third homotopy group of the sphere spectrum on the geometric realization of the 3-groupoid of invertible conformal nets.  Our lower bound on the periodicity of the fermions is established by computing that this invariant on $\Fer(1)$ is a primitive $24^\text{th}$ root of unity.

Given a symmetric monoidal 3-category $C$, let $C^\times$ denote the symmetric monoidal 3-groupoid of invertible objects, invertible morphisms, invertible 2-morphisms, and invertible 3-morphisms of $C$.  (If the 3-category $C$ has a natural notion of adjoint on its 3-morphisms, as $\CN$ does, we let $C^\times$ refer to the 3-groupoid whose 3-morphisms are not only invertible but in fact unitary.)  The geometric realization $|C^\times|$ of $C^\times$ will have the structure of a spectrum, and the homotopy groups of this spectrum can be described as
\smallskip\smallskip\\
\indent\smallskip
$\pi_0\big(|C^\times|\big) = \text{equivalence classes of invertible objects of $C$,}$\\\smallskip\indent
$\pi_1\big(|C^\times|\big) = \text{equivalence classes of invertible morphisms from the unit 1 to itself,}$\\\smallskip\indent
$\pi_2\big(|C^\times|\big) = \text{equiv. classes of invertible 2-morphisms from the identity $\id_1$ to itself,}$\\\smallskip\indent
$\pi_3\big(|C^\times|\big) = \text{invertible 3-morphisms from the double identity $\id_{\id_1}$ to itself,}$\\\smallskip\indent
$\pi_n\hspace{-.02cm}\big(|C^\times|\big) = 0\quad \text{for}\quad n\ge 4.$
\smallskip

\noindent As any spectrum is a module over the sphere spectrum $\SS$, the homotopy groups $\pi_*\big(|C^\times|\big)$ are a module over $\pi_*(\SS)$, the ring of stable homotopy groups of spheres.  The most interesting piece of this module structure is the map
\[
\nu:\pi_0\big(|C^\times|\big) \longrightarrow \pi_3\big(|C^\times|\big),
\] 
given by the action of the generator $\nu$ of $\pi_3(\SS) = \ZZ/24$.

As the identity sector on the identity defect on the unit net is simply the Hilbert space of complex numbers, the third homotopy group $\pi_3\big(|\CN^\times|\big)$ of the 3-groupoid of invertible conformal nets is $S^1$.  The above action of the generator $\nu$ of the third stable homotopy group of spheres provides an invariant
\[
\nu:\big\{\text{Invertible conformal nets}\big\}\rightarrow S^1.
\]
Because the class $\nu \in \pi_3(\SS) = \ZZ/24$ has order 24, this invariant necessarily takes values in the $24^\text{th}$ roots of unity.

\begin{theorem}
The image of $\Fer(1)$ under the map $\nu$ is a primitive $24^\text{th}$ root of unity.
\end{theorem}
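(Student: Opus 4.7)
The plan is to compute $\nu(\Fer(1))$ explicitly by unwinding the $\nu$-action in the 3-category $\CN$, identifying the resulting phase in $S^1$ with a concrete analytic invariant of $\Fer(1)$, and evaluating to see that the answer is a primitive $24^{\text{th}}$ root of unity. Because $\Fer$ is exponential, this will in particular be enough to recover the lower bound of $24$ on the fermion periodicity alluded to in the previous section.

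First I would describe the Hopf action concretely. Since $\nu \in \pi_3(\SS)$ is the stable Hopf map, general nonsense about the action of $\pi_*(\SS)$ on a spectrum says that for an invertible net $\cA$ with weak inverse $\cA^{-1}$, invertibility data $\eta \colon 1 \to \cA \otimes \cA^{-1}$ and $\epsilon \colon \cA^{-1} \otimes \cA \to 1$, and symmetric braiding $\beta$, the element $\nu(\cA) \in \pi_3(|\CN^\times|) = S^1$ is the unitary intertwiner assembled from these data according to the combinatorics of the Hopf fibration $S^3 \to S^2$. I would exhibit such an assembly explicitly in $\CN$ and verify that it is independent of the choice of $(\cA^{-1},\eta,\epsilon)$.

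Next I would identify this abstract $3$-cell with a phase familiar from chiral conformal field theory. The natural candidate is the framing anomaly: for a chiral net of central charge $c$, the action of a single Dehn twist on the vacuum sector is multiplication by $e^{2\pi i c/24}$, and the Hopf map can be modeled geometrically as a full framing change on a thickened torus. The claim to establish is that $\nu(\cA)$ coincides with this modular-anomaly phase, up to an explicit universal factor accounting for the $\ZZ/2$-grading of $\cA$---so that the output of $\nu$ on a $\ZZ/2$-graded invertible net with central charge $c$ lands in $24^{\text{th}}$ roots of unity rather than $48^{\text{th}}$ roots.

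Applied to $\Fer(1)$ with central charge $c = 1/2$, this identification would yield a $24^{\text{th}}$ root of unity whose exponent is coprime to $24$, hence primitive. The hardest step will be the identification in the preceding paragraph: rigorously relating the abstract $3$-categorical Hopf cell to the analytic Dehn-twist phase requires a careful geometric manipulation inside the internal bicategory structure of $\CN$, using the vacuum-sector and split-property axioms of Definition~\ref{def:conformal-net}, and working with the explicit realization of $\Fer(1)$ as the Fock completion of the Clifford algebra of $L^2$-spinors. Once this identification is in hand, primitivity is immediate from the modular anomaly of the chiral free fermion.
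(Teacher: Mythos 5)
Your plan hinges entirely on an identification that you never establish, and that identification is in fact the whole content of the theorem. You propose that the $3$-cell obtained by acting with the stable Hopf map on the class of an invertible net equals the chiral framing anomaly, i.e.\ the Dehn-twist phase $e^{2\pi i c/24}$, ``up to an explicit universal factor accounting for the $\ZZ/2$-grading.'' Note that the naive anomaly phase for $\Fer(1)$, with $c=1/2$, is a $48^{\text{th}}$ root of unity, which cannot be the value of $\nu F$ since $24\nu=0$ forces the invariant into $\mu_{24}$; so some correction factor must exist, but you introduce it precisely to resolve this tension, with no argument for what it is. The primitivity conclusion is extremely sensitive to that factor: if the correct statement were, say, an anomaly phase whose exponent picks up an even multiplier, the image could be an imprimitive $24^{\text{th}}$ root (or even trivial), and the theorem would fail. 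Declaring this identification ``the hardest step'' and deferring it to an unspecified geometric manipulation inside the internal bicategory leaves the proof without its key input; there is also a secondary gap in your first step, since realizing the $\pi_3(\SS)$-action on $\pi_0$ of a symmetric monoidal $3$-groupoid as an explicit composite of invertibility data and braidings (``the combinatorics of the Hopf fibration'') is itself a nontrivial coherence problem that you would have to solve before the analytic identification can even be formulated.

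For comparison, the paper avoids any direct CFT-anomaly computation. It topologizes the $3$-groupoid to get $|\CN_{\mathrm{top}}^\times|$ with $\pi_3=0$, $\pi_4\cong\ZZ$, uses the fibration $|\CN^\times|\to|\CN_{\mathrm{top}}^\times|\to K(\RR,4)$ to convert the equation $\nu F=e^{2k\pi i/24}$ into a Toda bracket relation $k\in\langle 24,\nu,F\rangle$ in $\pi_4(|\CN_{\mathrm{top}}^\times|)$, and then feeds in the known relation $\omega\in\langle 24,\nu,\iota\rangle$ in $\ZZ\times BO$ through the map $\rho\colon\ZZ\times BO\to|\CN_{\mathrm{top}}^\times|$ induced by $V\mapsto\Fer(V)$. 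The hard input there is that $\rho$ is an isomorphism on $\pi_4$, which is carried out in~\cite{[DH]}. If you want to pursue your route, the analogue of that hard input is a precise theorem relating the $\pi_3(\SS)$-module structure of $\pi_*(|\CN^\times|)$ to the central charge (including the grading correction), and that is what you would need to prove rather than assume.
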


\begin{proof}
The homotopy groups of $|\CN^\times|$ are as follows:
\bigskip\\
\centerline{\!\!
\begin{tabular}{|c|c|c|c|c|}
\hline
$\pi_0(|\CN^\times|)$&$\pi_1(|\CN^\times|)$&$\pi_2(|\CN^\times|)$&$\pi_3(|\CN^\times|)$&$\pi_{\ge 4}(|\CN^\times|)$\\\hline
$\phantom{\big|}?\phantom{\big|}$&$\ZZ/2$&$\ZZ/2$&$S^1$&$0$\\\hline
\end{tabular}
}\bigskip\\
The first $\ZZ/2$ corresponds to the two Morita equivalence classes of $\ZZ/2$-graded central simple algebras over $\CC$: $\Cl(0)$ and $\Cl(1)$.  The second $\ZZ/2$ corresponds to the two isomorphism classes of $\ZZ/2$-graded lines: the even line and the odd line.  The last non-zero homotopy group $S^1$ corresponds to the linear isometries of $\CC$.

Let $\CN_\mathrm{top}^\times$ be the topological 3-category whose underlying 3-category is $\CN^\times$, and whose spaces of 3-morphisms
have been topologized as subspaces of mapping spaces between Hilbert spaces.
The homotopy groups of the geometric realization of $\CN_\mathrm{top}^\times$ are as follows:
\bigskip\\
\centerline{\!\!
\begin{tabular}{|c|c|c|c|c|c|}
\hline
$\pi_0(|\CN_\mathrm{\!top}^\times|)$&$\pi_1(|\CN_\mathrm{\!top}^\times|)$&$\pi_2(|\CN_\mathrm{\!top}^\times|)$&$\pi_3(|\CN_\mathrm{\!top}^\times|)$&$\pi_{4}(|\CN_\mathrm{\!top}^\times|)$&$\pi_{\ge 5}(|\CN_\mathrm{\!top}^\times|)$\\\hline
$\cong \pi_0(|\CN^\times|)$&$\ZZ/2$&$\ZZ/2$&$0$&$\ZZ$&$0$\\\hline
\end{tabular}
}\bigskip\\
Indeed we have a fibration sequence
\[
|\CN^\times|\to |\CN_\mathrm{\!top}^\times|\to K(\RR,4),
\]
where $\RR$ is given the discrete topology.
Let $F$ denote the image of $\Fer(1)$ in $\pi_0(|\CN^\times|)$.
It follows from the above fiber sequence that the equation $\nu F = e^{2 k \pi i/24}$ in $\pi_3\big(|\CN^\times|\big)$ is equivalent to the Toda bracket relation $k\in \langle 24, \nu, F\rangle$ in $\pi_4(|\CN_\mathrm{\!top}^\times|)$.  (Note that because $\pi_4(\SS) = 0$, this Toda bracket is a $24\ZZ$-torsor inside $\pi_4 (|\CN_\mathrm{\!top}^\times|) \cong \ZZ$.)

We will compute the Toda bracket $\langle 24, \nu, F \rangle$ in $|\CN_\mathrm{\!top}^\times|$ by relating it to a Toda bracket in $\ZZ \times BO$.  The space $\coprod BO(n)$ is a classifying space for real vector spaces, so the free fermion functor $V \mapsto \Fer(V)$, from real vector spaces to conformal nets, induces a map
\[
\coprod_{n\ge 0}BO(n)\to |\CN_\mathrm{\!top}^\times|.
\]
Because the target $|\CN_\mathrm{\!top}^\times|$ is group complete, this map extends to a map of spectra
\[
\rho: \ZZ\times BO\to |\CN_\mathrm{\!top}^\times|
\]
from the group completion $\Omega B(\coprod BO(n)) \simeq \ZZ \times BO$.  This map sends the generator $\iota$ of $\pi_0(\ZZ \times BO)$ to the class of the free fermion $F \in \pi_0(|\CN_\mathrm{\!top}^\times|)$; that is $\rho(\iota) = F$.  The spectrum $\ZZ \times BO$ has the Toda bracket relation $\omega \in \langle 24, \nu, \iota \rangle$ where $\omega \in \pi_4(\ZZ \times BO) \cong \ZZ$ is a generator.  As Toda brackets are natural, we have the relation
\[
\rho(\omega) \in \rho ( \langle 24, \nu, \iota \rangle ) \subset \langle 24, \nu, F \rangle \subset \pi_4 (|\CN_\mathrm{\!top}^\times|).
\]
It therefore suffices to check that $\rho(\omega)$ is a generator of $\pi_4 (|\CN_\mathrm{\!top}^\times|)$.  An involved computation carried out in~\cite{[DH]} shows that $\rho$ induces an isomorphism on $\pi_4$.  Altogether, we conclude that $\nu F = e^{\pm 2\pi i/24}\in \pi_3(|\CN^\times|)\cong S^1$, as desired.
\end{proof}

As the invariant $\nu F \in \pi_3(|\CN^\times|)$ only depends on the $\CN$-equivalence class of the fermion, the above result provides a lower bound on the periodicity of $\Fer(1)$ in the group $\pi_0(|\CN^\times|)$ of invertible conformal nets.
\begin{corollary}
If $\Fer(n)$ is $\CN$-equivalent to the trivial conformal net, then $n$ is a multiple of $24$.
\end{corollary}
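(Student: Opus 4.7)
The plan is to combine the preceding theorem with the exponential structure of the fermion functor, reducing the corollary to a short computation in the cyclic group generated by $\nu F$ inside $S^1$.

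First I would record the abelian-group structures in play. Since $|\CN^\times|$ is a spectrum, $\pi_0(|\CN^\times|)$ is the group of $\CN$-equivalence classes of invertible nets under tensor product, and the map $\nu\colon \pi_0(|\CN^\times|) \to \pi_3(|\CN^\times|) \cong S^1$ defined by multiplication by the generator $\nu \in \pi_3(\SS)$ is a homomorphism of abelian groups; this is automatic from the fact that $\pi_*(|\CN^\times|)$ is a module over the graded ring $\pi_*(\SS)$.

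Next I would invoke the exponential property $\Fer(V\oplus W) = \Fer(V) \otimes \Fer(W)$ from section~4. Iterating this shows $[\Fer(n)] = n\cdot[\Fer(1)] = nF$ in $\pi_0(|\CN^\times|)$, so additivity of $\nu$ gives
\[
\nu[\Fer(n)] \,=\, n\cdot\nu F \,=\, (\nu F)^n,
\]
where on the right I have rewritten the group law on $\pi_3(|\CN^\times|) \cong S^1$ multiplicatively.

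Finally, if $\Fer(n)$ is $\CN$-equivalent to the trivial net then $[\Fer(n)] = 0$ in $\pi_0(|\CN^\times|)$, so $(\nu F)^n = 1$ in $S^1$. By the theorem $\nu F$ is a primitive $24^\text{th}$ root of unity, hence of order exactly $24$, which forces $24 \mid n$. The only substantive ingredient is the theorem itself; there is no real obstacle in the reduction, only the bookkeeping of checking that $\nu$ is additive and that the fermion construction induces the addition map on $\pi_0$.
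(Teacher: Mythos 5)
Your argument is correct and is exactly the reduction the paper intends: $\nu$ is additive because $\pi_*(|\CN^\times|)$ is a $\pi_*(\SS)$-module, the exponential property gives $[\Fer(n)]=n\,[\Fer(1)]$ in $\pi_0(|\CN^\times|)$, and the theorem that $\nu F$ is a primitive $24^{\text{th}}$ root of unity then forces $24\mid n$. The paper leaves this bookkeeping implicit (it only remarks that the invariant depends just on the $\CN$-equivalence class), so your write-up simply makes the same argument explicit.
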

In the notes~\cite{[Dr]}, Drinfel{\textquotesingle}d says that there is a braided monoidal category (the Ising category) associated to the free fermion, that is 16-periodic with respect to an operation of `reduced tensor product'.  Combining this mod-16 invariant with the above mod-24 invariant should show that the period of the free fermion, if finite, is in fact a multiple of 48.

\end{document}